\def\newaliasedtheorem#1[#2]#3{
	\newaliascnt{#1@alt}{#2}
	\newtheorem{#1}[#1@alt]{#3}
	\expandafter\newcommand\csname #1@altname\endcsname{#3}
}
\theoremstyle{plain}
\newtheorem{theorem}{Theorem}[section]
\theoremstyle{definition}
\theoremstyle{remark}
\newcommand{\R}{\mathbb{R}}
\newcommand{\C}{\mathbb{C}}
\let\altphi\phi
\let\phi\varphi
\let\varphi\altphi
\let\altphi\undefined
\newcommand{\average}{{\mathchoice {\kern1ex\vcenter{\hrule height.4pt
width 6pt
depth0pt} \kern-9.7pt} {\kern1ex\vcenter{\hrule height.4pt width 4.3pt
depth0pt}
\kern-7pt} {} {} }}
\address{\textsc{Daniela Di Donato}: 
Dipartimento di Ingegneria Industriale e Scienze Matematiche, Via Brecce Bianche,12 60131 Ancona , Universit\'a Politecnica delle Marche.}
\email{d.didonato@staff.univpm.it}
\title{Intrinsically H\"older sections in metric spaces}
\date{\today}
\author{ Daniela Di Donato }
\subjclass[]{ 
26A16  
51F30 
54C20 
54D30 
54E35 
}
\keywords{H\"older graphs, Ahlfors-David regularity, Extension theorem, Ascoli-Arzel\`a compactness theorem, vector space, equivalence relation, Metric spaces}
\begin{document}

\begin{abstract}
		We introduce a  notion of  intrinsically H\"older graphs in metric spaces. Following a recent paper of Le Donne and the author, we prove some relevant results as the Ascoli-Arzel\`a compactness Theorem, Ahlfors-David regularity and the Extension Theorem for this class of sections. In the first part of this note, thanks to Cheeger theory, we define suitable sets in order to obtain a vector space over $\R$ or $\C,$ a convex set and an equivalence relation for intrinsically H\"older graphs. These last three properties are new also in the Lipschitz case. Throughout the paper, we use basic mathematical tools.
	\end{abstract}
	\maketitle 
	\tableofcontents

\section{Introduction}
Starting to the seminal papers by Franchi, Serapioni and Serra Cassano \cite{FSSC, FSSC03, MR2032504} (see also  \cite{SC16, FS16}), Le Donne and the author  generalize the notion of intrinsically Lipschitz maps introduced in subRiemannian Carnot groups \cite{ABB, BLU, CDPT}. This concept was introduced in order to give a good definition of rectifiability in subRiemannian geometry after the negative result of Ambrosio and Kirchheim shown in \cite{AmbrosioKirchheimRect} (see also \cite{biblioMAGN1}) regarding the classical definition of rectifiability using Lipschitz maps given by Federer \cite{Federer}. The notion of rectifiable sets is a key one in Calculus of Variations and in Geometric Measure Theory. The reader can see \cite{MR2048183, MR2247905, DB22, AM2022.1, AM2022.2, M95, M61, M95b, NY18, DS91a, DS91b}.

In \cite{DDLD21} we prove some relevant statements in  metric spaces, like the Ahlfors-David regularity, the Ascoli-Arzel\'a Theorem, the Extension Theorem for the so-called intrinsically Lipschitz sections. Our point of view is to consider the graphs theory instead of the maps one. 
 In a similar way, in this paper  we give a natural definition of intrinsically H\"older sections which includes Lipschitz ones and we prove the following results using basic mathematical tools and getting short proofs.
\begin{enumerate}
\item  Theorem \ref{thm1}, i.e.,  Compactness Theorem a l\'a Ascoli-Arzel\`a for the intrinsically H\"older sections.
\item Theorem \ref{thm3}, i.e., Extension Theorem for the  intrinsically H\"older sections. 
\item Theorem \ref{thm2}, i.e.,  Ahlfors-David regularity for the intrinsically H\"older sections.
\item Proposition \ref{propLeibnitz formula for slope} states that the class of the intrinsically H\"older sections is a convex set.
\item Theorem \ref{theorem24} states that a suitable class of the intrinsically H\"older sections is a vector space over $\R$ or $\C.$
\item Theorem \ref{theorem24aprile} gives an equivalence relation for a suitable class of the intrinsically H\"older sections.
\end{enumerate}

The last three points are new results also in the context of Lipschitz sections.
	
 Our setting is the following. We have a metric space $X$, a topological space $Y$, and a 
quotient map $\pi:X\to Y$, meaning
continuous, open, and surjective.
The standard example for us is when $X$ is a metric Lie group $G$ (meaning that the Lie group $G$ is equipped with a left-invariant distance that induces the manifold topology), for example a subRiemannian Carnot group, 
and $Y$ is the space of left cosets $G/H$, where 
$H<G$ is a  closed subgroup and $\pi:G\to G/H$ is the projection modulo $H$, $g\mapsto gH$.

\begin{defi}\label{def_ILS}We say that a map $\phi:Y\to X$ is an {\em intrinsically $(L,\alpha )$-H\"older section of $\pi$},  with $\alpha \in (0,1]$ and $L>0$, if
\begin{equation*}
\pi \circ \phi =\mbox{id}_Y,
\end{equation*}
and
\begin{equation}\label{IntrinsicPROERTY}
d(\phi (y_1), \phi (y_2)) \leq L d(\phi (y_1), \pi ^{-1} (y_2))^\alpha + d(\phi (y_1), \pi ^{-1} (y_2)), \quad \mbox{for all } y_1, y_2 \in Y.
\end{equation}
Here $d$ denotes the distance on $X$, and, as usual, for a subset $A\subset X$ and a point $x\in X$, we have
$d(x,A):=\inf\{d(x,a):a\in A\}$.
\end{defi}

When $\alpha =1,$ a section $\phi$ is intrinsically Lipschitz in the sense of \cite{DDLD21}. In a natural way, the author introduced and studied other two classes of sections: the  intrinsically quasi-symmetric  sections \cite{D22.1} and the intrinsically quasi-isometric sections  \cite{D22.2} in metric spaces. 

 In Proposition \ref{Intrinsic H\"older  section.2}, we show that, when $Y$ is a bounded set,  
 \eqref{IntrinsicPROERTY} is equivalent to ask that 
\begin{equation}\label{IntrinsicPROERTY.2}
d(\phi (y_1), \phi (y_2)) \leq K d(\phi (y_1), \pi ^{-1} (y_2))^\alpha , \quad \mbox{for all } y_1, y_2 \in Y,
\end{equation}
 for a suitable $K \geq 1.$ This property will be useful in order to get Compactness Theorem a l\'a Ascoli-Arzel\'a.  
  Moreover, we underline that, in the case $\alpha =1$ and $
 \pi$ is a Lipschitz quotient or submetry \cite{MR1736929, Berestovski}, the results trivialize, since in this case being intrinsically Lipschitz  is equivalent to biLipschitz embedding, see Proposition 2.4 in \cite{DDLD21}.
  
    \medskip

  The first series of our results is about the equicontinuity of intrinsically H\"older sections with uniform constant and consequently a compactness property \`a la Ascoli-Arzel\`a.

 \begin{theorem}[Equicontinuity and Compactness Theorem]
 \label{thm1}
 Let $\pi:X\to Y$ be a quotient map 
between a  metric space $X$ and  a topological space $Y$.   

\noindent{\rm \bf(\ref{thm1}.i)} Every intrinsically H\"older section of $\pi$   is continuous.

    Next, assume in addition  that closed balls in  $X$ are compact.

\noindent{\rm \bf(\ref{thm1}.ii)} For all
    $K' \subset Y$ compact,
  $L\geq 1 , \alpha \in (0,1)$,     $K \subset X$ compact, and  $y_0\in Y$
   the set
 \begin{equation*}\label{set2}
\{ \phi _{|_{K'}} : K' \to X \,| \, \phi :Y \to X \mbox{intrinsically $(L,\alpha )$-H\"older  section of $\pi$}, \phi (y_0) \in K \}
\end{equation*}
is 
 equibounded, equicontinuous, and closed in the uniform-convergence topology.
 
 \noindent{\rm \bf(\ref{thm1}.iii)} For all $L\geq 1, \alpha \in (0,1) $,     $K \subset X$ compact, and  $y_0\in Y$
the set
 \begin{equation*}\label{set1}
\{ \phi : Y \to X \,| \, \phi \text{ intrinsically $(L,\alpha )$-H\"older  section of $\pi$},
 \phi (y_0) \in K \}
\end{equation*}
  is compact with respect to  the topology of uniform convergence on compact sets.
  \end{theorem}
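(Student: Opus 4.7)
\emph{Strategy.} The linchpin is the jointly continuous distance-to-fiber function $g\colon X\times Y\to[0,\infty)$ defined by $g(x,y):=d(x,\pi^{-1}(y))$. Upper semicontinuity of $g$ follows from openness of $\pi$: if $(x_n,y_n)\to(x,y)$, pick $x^\ast\in\pi^{-1}(y)$ with $d(x,x^\ast)<g(x,y)+\eps$; since $\pi(B(x^\ast,\eps))$ is an open neighbourhood of $y$ it contains $y_n$ eventually, producing $x'_n\in\pi^{-1}(y_n)$ close to $x^\ast$ and hence to $x_n$. Lower semicontinuity genuinely requires closed balls of $X$ to be compact: take almost-minimizers $x'_n\in\pi^{-1}(y_n)$ for $g(x_n,y_n)$, use boundedness to extract a subsequential limit, which lies in $\pi^{-1}(y)$ by continuity of $\pi$. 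This asymmetry explains why (i) needs only openness whereas (ii)--(iii) require properness.

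\textbf{Part (i).} For $y_n\to y$ in $Y$, Definition~\ref{def_ILS} gives
$$
  d(\phi(y),\phi(y_n))\le L\,g(\phi(y),y_n)^\alpha+g(\phi(y),y_n),
$$
and the right-hand side tends to $0$ because $g(\phi(y),\cdot)$ is continuous and vanishes at $y$.

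\textbf{Part (ii).} For equiboundedness, apply the H\"older inequality with $y_1=y_0$ and $y_2=y\in K'$ and observe that $M:=\sup_{(x,y)\in K\times K'}g(x,y)<\infty$ by continuity of $g$ on the compact set $K\times K'$; this confines every value $\phi(y)$ for $y\in K'$ to a fixed set $K^\ast\subset X$ that is compact by properness. For equicontinuity at $y\in K'$, the compact slice $A:=K^\ast\cap\pi^{-1}(y)$ satisfies $g\equiv 0$ on $A\times\{y\}$, so a tube-lemma style finite cover yields a neighbourhood $U$ of $y$ with $g(x,y')<\eps$ for all $(x,y')\in A\times U$; substituting $x=\phi(y)$ into the H\"older inequality gives $d(\phi(y),\phi(y'))\le L\eps^\alpha+\eps$, uniformly over the family. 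For closedness, if $\phi_n|_{K'}\to\psi$ uniformly, one passes to the limit on both sides of the H\"older inequality using continuity of $g$ to see that $\psi$ is an intrinsically $(L,\alpha)$-H\"older section on $K'$ (and $\pi\circ\psi=\mathrm{id}_{K'}$); the Extension Theorem~\ref{thm3} then produces $\phi\colon Y\to X$ with $\phi|_{K'}=\psi$, and closedness of the compact set $K$ preserves the base-point condition at $y_0$.

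\textbf{Part (iii).} This is Ascoli--Arzel\`a in the topology of uniform convergence on compacta: by (ii) the family is pointwise relatively compact (each $\phi(y)$ lies in the compact set $K^\ast$ which depends only on $K$ and $K'$) and equicontinuous on every compact $K'\subset Y$, while any subnet limit lies again in the family by the closedness part of (ii). The main technical obstacle is establishing the continuity of $g$ and then carefully threading both the $\alpha$-power and linear terms of the H\"older bound through all three parts; the closedness argument additionally depends on the forthcoming Extension Theorem, so the internal order of results in the paper must be respected.
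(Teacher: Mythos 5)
Your overall architecture closely parallels the paper's — openness of $\pi$ for continuity, compactness of closed balls for equiboundedness, a uniform-openness (tube) argument for equicontinuity, and Ascoli--Arzel\`a for compactness — and the repackaging around $g(x,y)=d(x,\pi^{-1}(y))$ is a tidy way to organise the estimates. But there is a genuine gap in the closedness step of part~(ii). After passing the H\"older inequality to the limit to conclude that $\psi$ is intrinsically $(L,\alpha)$-H\"older on $K'$, you invoke the Extension Theorem~\ref{thm3} to upgrade $\psi$ to a globally defined intrinsically H\"older section of $\pi$ on $Y$. That step fails for two independent reasons. First, part~(\ref{thm3}.ii) of the Extension Theorem carries heavy extra hypotheses (a geodesic $X$, a biLipschitz-equivalent $\rho$, a calibration $\tau$ that is H\"older and biLipschitz on fibers, a Lipschitz $\delta_{\tau_0}$, etc.) that are simply not assumed in Theorem~\ref{thm1}, so it is not available here. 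Second, even where those hypotheses hold, the conclusion of Theorem~\ref{thm3}(ii) is not an extension of a section to a section: it produces a \emph{function} $f:X\to\R$ with the graph inclusion $\phi(Y')\subseteq f^{-1}(0)$, which is a different kind of object. The paper instead avoids any extension: it takes the limit of $d(\phi_h(y),\phi_h(y'))\le L\,d(\pi^{-1}(y),\phi_h(y'))^\alpha$ directly for $y,y'\in K'$ (using the simplified bound of Proposition~\ref{Intrinsic H\"older  section.2}, which is legitimate here because equiboundedness confines the values to a compact set) and notes that both the section condition and $\phi_h(y_0)\in K$ pass to the limit; no global extension is constructed or needed. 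The very fact that you had to flag a dependency on the ``forthcoming'' Theorem~\ref{thm3} is a symptom that this route has gone wrong.

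A secondary caveat: the joint lower semicontinuity of $g$ that you assert would, in the generality stated, require uniqueness of limits in $Y$ (i.e.\ Hausdorffness) to conclude that a cluster point of $x'_n\in\pi^{-1}(y_n)$ lies in $\pi^{-1}(y)$; the paper makes no such assumption on $Y$. Fortunately, none of your three parts actually needs lower semicontinuity of $g$: upper semicontinuity (which genuinely follows from openness of $\pi$) suffices for both the tube lemma and the supremum in the equiboundedness step, and in the closedness step you only use continuity of $g$ in its first variable, which is free since $x\mapsto d(x,A)$ is $1$-Lipschitz. So this is a flaw in the exposition rather than in the mechanism, but it is worth trimming.
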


 Another crucial property of H\"older sections is that under suitable assumptions they can be extended. This property is much studied in the context of metric spaces if we consider the H\"older maps; the reader can see \cite{ALPD20, LN05, O09} and their references. Our proof follows using the link between H\"older sections  and level sets of suitable maps. This idea is widespread in the context of subRiemannian Carnot groups (see, for instance, \cite{ASCV, ADDDLD, DD19,  Vittone20}). In next result, we say that a map $f$ on $X$ is $L$-{\em biLipschitz on  fibers} (of $\pi$)  if 
 on each fiber of $\pi$ it restricts to an $L$-biLipschitz map.
 
\begin{theorem}[Extensions as level sets]\label{thm3}    
 Let $\pi:X\to Y$ be a quotient map 
between a  metric space $X$ and  a topological space $Y$. 

\noindent{\rm \bf(\ref{thm3}.i)}
If $Z$ is a metric space, $z_0\in Z$ and $f:X\to Z$ is  $(\lambda , \beta)$-H\"older and $\lambda $-biLipschitz on  fibers, with $\lambda >0$ and $\beta \in (0,1)$, then
  there exists an intrinsically $(\lambda ^2, \beta )$-H\"older section  $\phi : Y\to X $ of $\pi$ such that
\begin{equation}\label{equationluogoz0_intro}
\phi (Y)=f^{-1} (z_0).
\end{equation} 

\noindent{\rm \bf(\ref{thm3}.ii)}
Vice versa, assume that $X$ is geodesic and that there exist $k\geq 1, \alpha \in (0,1),$ $\rho : X \times X \to \R$ k-biLipschitz equivalent to the distance of $X,$ and $\tau :X \to \R$ is $ ( k,\alpha )$-H\"older and $k$-biLipschitz on  fibers such that  \begin{enumerate}
\item  for all $\tau_0\in \R $ the set $\tau ^{-1}(\tau_0)$ is an intrinsically $(k,\alpha )$-H\"older graph of a section $\phi_{\tau_0}:Y\to X;$
\item for all $x_0\in \tau ^{-1} (\tau_0)$ the map $X\to \R, x \mapsto \delta _{\tau_0} (x) := \rho (x_0, \phi_{\tau_0}(\pi (x)))$ is $k$-Lipschitz on the set $\{|\tau| \leq \delta _{\tau_0} \}$.
\end{enumerate}
Let $Y'\subset Y$ a set and $L\geq1$.
Then for every  intrinsically $(L,\alpha )$-H\"older section  $\phi : Y'\to \pi^{-1}(Y') $  of $\pi|_{\pi^{-1}(Y')} :\pi^{-1}(Y') \to  Y' $, there exists a  map $ f:X\to \R$    that is $(K,\alpha )$-H\"older and $K$-biLipschitz on  fibers, with $K\geq 1,$ such that     \begin{equation}\label{equationluogozeri_intro}
\phi (Y')\subseteq f^{-1} (0).
\end{equation}
In particular, each `partially defined' intrinsically H\"older graph $\phi (Y')$ is a subset of a `globally defined' intrinsically H\"older graph $ f^{-1} (0)$.
 \end{theorem}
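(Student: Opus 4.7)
The plan splits between Parts~(i) and~(ii). For Part~(i), because $f$ is $\lambda$-biLipschitz on each fiber $\pi^{-1}(y)$, it is injective there, so $\pi^{-1}(y) \cap f^{-1}(z_0)$ contains at most one element; I would define $\phi(y)$ to be this point (implicitly using the surrounding hypotheses to ensure the intersection is nonempty on every fiber). Then $\pi \circ \phi = \mathrm{id}_Y$ is immediate, $\phi(Y) \subseteq f^{-1}(z_0)$ holds by construction, and the reverse inclusion follows from fiberwise uniqueness. For the intrinsic $(\lambda^{2},\beta)$-Hölder bound, fix $y_1, y_2 \in Y$ and any $w \in \pi^{-1}(y_2)$ and combine the fiberwise biLipschitz property and the global Hölder property of $f$:
\[
d(\phi(y_2), w) \;\leq\; \lambda\, d\bigl(f(\phi(y_2)), f(w)\bigr) \;=\; \lambda\, d\bigl(f(\phi(y_1)), f(w)\bigr) \;\leq\; \lambda^{2}\, d(\phi(y_1), w)^{\beta}.
\]
The triangle inequality then gives $d(\phi(y_1), \phi(y_2)) \leq d(\phi(y_1), w) + \lambda^{2}\, d(\phi(y_1), w)^{\beta}$, and letting $w$ range through $\pi^{-1}(y_2)$ with $d(\phi(y_1), w) \to d(\phi(y_1), \pi^{-1}(y_2))$ (the right-hand side is continuous in this variable) yields \eqref{IntrinsicPROERTY} with $L = \lambda^{2}$ and $\alpha = \beta$.

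For Part~(ii) the natural candidate is
\[
f(x) \;=\; \tau(x) - h(\pi(x)),
\]
where $h : Y \to \R$ extends the map $y \mapsto \tau(\phi(y))$ from $Y'$ to all of $Y$. With this shape the vanishing on $\phi(Y')$ is built in, and on each fiber $\pi^{-1}(y)$ the function $f$ differs from $\tau$ by the constant $h(y)$, so the $k$-biLipschitz-on-fibers property of $\tau$ passes directly to $f$. Guided by the McShane extension, I would take
\[
h(\pi(x)) \;:=\; \inf_{y \in Y'}\Bigl[\,\tau(\phi(y)) \;+\; C\,\rho\bigl(\phi_{\tau(\phi(y))}(\pi(x)),\,\phi(y)\bigr)^{\alpha}\,\Bigr],
\]
with $C$ large enough (depending on $k, L, \alpha$) that the infimum reproduces $\tau(\phi(y))$ on $Y'$, using the intrinsic Hölder bound on $\phi$ together with assumption~(1) that each level set $\tau^{-1}(\tau_0)$ is the graph of $\phi_{\tau_0}$.

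The principal obstacle is then bounding $|h(\pi(x_1)) - h(\pi(x_2))|$ by a constant times $d(x_1, x_2)^{\alpha}$, so that $f = \tau - h \circ \pi$ is globally $(K,\alpha)$-Hölder. Here assumption~(2) --- the $k$-Lipschitz control of $x \mapsto \rho(x_0, \phi_{\tau_0}(\pi(x)))$ on $\{|\tau|\le\delta_{\tau_0}\}$ --- is the crucial input, since it translates movement of $\pi(x)$ into controlled movement of the lifted points on each level set; together with the $(k,\alpha)$-Hölder property of $\tau$ and the geodesic assumption on $X$ (used to chain local Hölder estimates along minimizing paths between $x_1$ and $x_2$), this should produce a universal $K\geq 1$ and complete the extension.
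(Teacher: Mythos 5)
Part (i) of your proposal is essentially the paper's own argument (define $\phi(y)$ as the unique preimage of $z_0$ in $\pi^{-1}(y)$, then chain fibrewise biLipschitz and global H\"older control through the triangle inequality); you are slightly more careful in that you take a limit $d(\phi(y_1),w)\to d(\phi(y_1),\pi^{-1}(y_2))$ rather than implicitly assuming the infimum is attained. No objection there.

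Part (ii), however, has a genuine gap. Writing $x_0=\phi(y)$, $\tau_0=\tau(x_0)$, your formula unfolds to
\[
f(x)=\sup_{x_0\in\phi(Y')}\bigl[\tau(x)-\tau(x_0)-C\,\delta_{\tau_0}(x)^{\alpha}\bigr],
\]
so you are in effect taking a supremum over inner functions $g_{x_0}(x)=\tau(x)-\tau(x_0)-C\,\delta_{\tau_0}(x)^{\alpha}$. For this to give a $(K,\alpha)$-H\"older $f$ you need each $g_{x_0}$ to be $(K,\alpha)$-H\"older with a constant independent of $x_0$. But hypothesis (2) only makes $\delta_{\tau_0}$ $k$-Lipschitz \emph{on the set} $\{|\tau|\le\delta_{\tau_0}\}$; outside that set you have no regularity for $\delta_{\tau_0}$ at all, and in particular no bound on $|g_{x_0}(x_1)-g_{x_0}(x_2)|$. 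Your ``modulus'' $\rho(\phi_{\tau(\phi(y))}(\pi(x)),\phi(y))^{\alpha}$ is also not a symmetric pseudometric on $Y$, so the standard McShane mechanism does not kick in automatically. You correctly identify bounding $|h(\pi(x_1))-h(\pi(x_2))|$ as ``the principal obstacle'' but then assert, without justification, that assumption (2) plus geodesicity ``should produce a universal $K$''. That is exactly where the argument is incomplete. The paper resolves this by replacing $g_{x_0}$ with a piecewise-defined $f_{x_0}$: near the level set (where $|\tau-\tau_0|$ is comparable to $\gamma(\delta_{\tau_0}^\alpha+\delta_{\tau_0})$, which is the regime where assumption (2) gives Lipschitz control of $\delta_{\tau_0}$) it uses both $\tau$ and $\delta_{\tau_0}$, while away from the level set it switches to formulas depending on $\tau$ alone (which \emph{is} globally H\"older); the three pieces are matched so that the whole $f_{x_0}$ is $(K,\alpha)$-H\"older, $f_{x_0}(x_0)=0$, and biLipschitz on fibres, and then $f=\sup_{x_0\in\phi(Y')}f_{x_0}$. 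That piecewise truncation is the missing idea: without it, the supremand is not uniformly H\"older and the construction breaks down.
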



We underline that an important point is that the constant $\beta$ in (\ref{thm3}.i) does not change. 
Following again \cite{DDLD21}, we can prove an Ahlfors-David regularity for the intrinsically H\"older sections.  Recall that in Euclidean case $\R^s,$ there are $(L, \alpha)$-H\"older maps such that the $(s+1-\alpha)$-Hausdorff measure of their graphs is not zero and the $(s+1)$-Hausdorff measure of their graphs is zero, we give the following result. 
    
  \begin{theorem}[Ahlfors-David regularity]\label{thm2}
 Let $\pi :X \to Y$ be a quotient map between a metric space $X$ and a topological space $Y$ such that there is a measure $\mu$ on $Y$ such that for every $r_0>0$ and every $x,x' \in X$ with $\pi (x)=\pi(x')$  there is $C>0$ such that 
   \begin{equation}\label{Ahlfors27ott.112}
\mu (\pi (B(x,r))) \leq C \mu (\pi (B(x',r))),\quad \forall  r\in (0,r_0).
\end{equation}
   
Let $\ell \in (0,\infty).$ We also assume that there is an intrinsically  $(L, \alpha )$-H\"older section $\phi :Y \to X$  of $\pi$ such that $\phi (Y)$ is locally $(\ell +1-\alpha)$-Ahlfors-David regular with respect to  the measure $\phi_* \mu$. 

  Then, for every intrinsically  $(L, \alpha)$-H\"older section $\psi :Y \to X$ of $\pi,$  the set $ \psi (Y) $ is locally $\alpha(\ell +1-\alpha)$-Ahlfors-David regular with respect to  the measure $\psi_* \mu.$ 

    \end{theorem}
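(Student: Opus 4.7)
The plan is to mirror the proof of the intrinsically Lipschitz analogue in \cite{DDLD21}. The key observation is that for any intrinsically $(L,\alpha)$-H\"older section $\sigma$ of $\pi$, the $\sigma_*\mu$-measure of a metric ball in $X$ can be sandwiched between $\mu$-measures of $\pi$-projections of concentric balls, and the homogeneity hypothesis \eqref{Ahlfors27ott.112} then transfers these estimates between the reference section $\phi$ and an arbitrary $\psi$.

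First I would localize to a bounded neighborhood of a base point $y_0 \in Y$, so that Proposition \ref{Intrinsic H\"older  section.2} applies and upgrades \eqref{IntrinsicPROERTY} to the stronger form
\[
d(\sigma(y_1), \sigma(y_2)) \leq K\, d(\sigma(y_1), \pi^{-1}(y_2))^\alpha,
\]
with a uniform constant $K$, simultaneously for $\sigma = \phi$ and $\sigma = \psi$. Using $\pi \circ \sigma = \mrm{id}_Y$, the trivial bound $d(\sigma(y), \pi^{-1}(y')) \leq d(\sigma(y), \sigma(y'))$ and the displayed H\"older inequality produce the two-sided sandwich
\[
\mu\bigl(\pi(B(\sigma(y), (r/K)^{1/\alpha}))\bigr) \leq \sigma_*\mu(B(\sigma(y), r)) \leq \mu\bigl(\pi(B(\sigma(y), r))\bigr).
\]

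Applied to the reference section $\phi$, this sandwich together with $\phi_*\mu(B(\phi(y), r)) \asymp r^{\ell+1-\alpha}$ yields two-sided scaling estimates on $\mu(\pi(B(\phi(y), \cdot)))$; the substitution $r \mapsto (r/K)^{1/\alpha}$ intrinsic to the sandwich is precisely what converts the exponent $\ell + 1 - \alpha$ into $\alpha(\ell + 1 - \alpha)$. The homogeneity \eqref{Ahlfors27ott.112} transfers these estimates on each fiber from $\phi(y)$ to $\psi(y)$ up to multiplicative constants. Inserting the result back into the sandwich applied to $\psi$ gives the required Ahlfors--David regularity of $\psi(Y)$ at dimension $\alpha(\ell+1-\alpha)$.

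The main obstacle is the book-keeping of exponents. The H\"older exponent $\alpha$ enters through the substitution $r \mapsto (r/K)^{1/\alpha}$ and propagates nonlinearly in each of the two passes of the sandwich; one has to track carefully which side of the sandwich is paired with which side of the scaling $\phi_*\mu(B(\phi(y), r)) \asymp r^{\ell+1-\alpha}$ so that the two inequalities in the final estimate end up at the common exponent $\alpha(\ell+1-\alpha)$. In the Lipschitz limit $\alpha=1$ this exponent collapses to $\ell$ and the argument reduces to that of \cite{DDLD21}.
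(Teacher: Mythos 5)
The sandwich you write down is exactly the content of the paper's Lemma~\ref{perHolder.303}, and your upper-bound track reproduces the paper's argument almost verbatim, so the structure of the approach is right. The localization through Proposition~\ref{Intrinsic H\"older  section.2} is unnecessary baggage, though: for $r\le 1$ the extra term $d(\phi(y_1),\pi^{-1}(y_2))$ in \eqref{IntrinsicPROERTY} is already dominated by $d(\phi(y_1),\pi^{-1}(y_2))^\alpha$, which is what the proof of Lemma~\ref{perHolder.303} uses, so no boundedness of $Y$ is needed.

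The genuine gap is in the claim that the two sides of the final estimate can be made to ``end up at the common exponent $\alpha(\ell+1-\alpha)$'' through careful book-keeping. They cannot. Track it: the only lower bound one has for $\mu(\pi(B(\phi(y),s)))$ comes from the right-hand inclusion of the sandwich, namely $\mu(\pi(B(\phi(y),s)))\ge \phi_*\mu(B(\phi(y),s))\ge c_1^{-1}s^{\ell+1-\alpha}$, with exponent $\ell+1-\alpha$; while the only upper bound comes from the left-hand inclusion and lands at $\alpha(\ell+1-\alpha)$. After transporting to $\psi(y)$ via \eqref{Ahlfors27ott.112} and running the sandwich for $\psi$, the upper side again uses the exponent-preserving inclusion and stays at $\alpha(\ell+1-\alpha)$, but the lower side must pass through the scale substitution $r\mapsto (r/K)^{1/\alpha}$ and comes out at $(\ell+1-\alpha)/\alpha$. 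For $\alpha<1$ these two exponents are distinct (equal only when $\alpha=1$), so what you actually prove is
\[
c^{-1}\,r^{(\ell+1-\alpha)/\alpha}\;\le\;\psi_*\mu\bigl(B(\psi(y),r)\cap\psi(Y)\bigr)\;\le\;c\,r^{\alpha(\ell+1-\alpha)},
\]
which is not Ahlfors--David regularity at a single exponent. There is no reshuffling of which side of the sandwich pairs with which side of the reference estimate that fixes this: in every pass, the scale-distorting inclusion necessarily appears on exactly one of the two inequalities, and it appears on opposite ones in the two passes, so the distortions combine as $\alpha$ on one side and $1/\alpha$ on the other rather than cancelling. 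You should also note that the paper's own proof writes out only the upper bound $\psi_*\mu(B(\psi(y),r)\cap\psi(Y))\le c_2\,r^{\alpha(\ell+1-\alpha)}$ and is silent on the lower bound, so the difficulty you ran into is real and is not resolved there either.
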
 
    
     Namely, in Theorem~\ref{thm2} locally $Q$-Ahlfors-David regularity means that    the measure $\phi_* \mu$ is such that  for each point $x\in \phi (Y)$ there exist $r_0>0$ and $C>0$ so that
  \begin{equation}\label{Ahlfors_IN_N}
 C^{-1}r^Q\leq  \phi_* \mu \big( B(x,r) \cap \phi (Y)\big) \leq C r^Q, \qquad \text{ for all }r\in (0,r_0).
\end{equation}
The same inequality will hold for   $\psi_* \mu $ with a possibly different value of $C$ and $Q$. See Section~\ref{theoremAhlforsNEW}. 

Finally, following Cheeger theory \cite{C99} (see also \cite{K04, KM16}), we give an  equivalent property of H\"older section which we will prove in  Section \ref{sec:equiv_def}. Here it is fundamental that $Y$ is a bounded set. 
 \begin{prop}\label{linkintrinsicocneelip}
   Let $X  $ be a metric space, $Y$ a topological and bounded space, $\pi :X \to Y$   a  quotient map, $L\geq 1$ and $\alpha , \beta , \gamma \in (0,1)$.
Assume that   every point $x\in X$ is contained in the image of an intrinsic $(L,\alpha )$-H\"older section $\psi_x$ for $\pi$.
 Then for every section $\phi :Y\to X$ of $\pi$ the following are equivalent:
   \begin{enumerate}
\item for all $x\in\phi(Y)$ the section $\phi $ is intrinsically $(L_1,\beta )$-H\"older with respect to  $\psi_x$ at   $x;$
\item  the section $\phi $  is intrinsically $(L_2,\gamma)$-H\"older.
\end{enumerate}
   \end{prop}

 {\bf Acknowledgements.}  We would like to thank Davide Vittone for helpful suggestions and Giorgio Stefani for the reference  \cite{KM16}.

\section{Equivalent definitions for intrinsically  H\"older  sections}
\label{sec:equiv_def}
 
\begin{defi}[Intrinsic H\"older section]\label{Intrinsic H\"older  section}
Let $(X,d)$ be a metric space and let $Y$ be a topological space. We say that a map $\phi :Y \to X$ is a {\em section} of a quotient map $\pi :X \to Y$ if
\begin{equation*}
\pi \circ \phi =\mbox{id}_Y.
\end{equation*}
Moreover, we say that $\phi$ is an {\em intrinsically $(L, \alpha)$-H\"older  section} with constant $L>0$ and $\alpha \in (0,1)$ if in addition
\begin{equation}\label{Intrinsic H\"older per semplificare}
d(\phi (y_1), \phi (y_2)) \leq L d(\phi (y_1), \pi ^{-1} (y_2))^\alpha +  d(\phi (y_1), \pi ^{-1} (y_2)), \quad \mbox{for all } y_1, y_2 \in Y.
\end{equation}

Equivalently, we are requesting that
\begin{equation*}
d(x_1, x_2) \leq L d(x_1, \pi ^{-1} (\pi (x_2)))^\alpha + d(x_1, \pi ^{-1} (\pi (x_2))), \quad \mbox{for all } x_1,x_2 \in \phi (Y) .
\end{equation*}
\end{defi}

We further rephrase the definition as saying that $\phi(Y)$, which we call the {\em graph} of $\phi$, avoids some particular sets (which depend on $\alpha , L$ and $\phi$ itself):


\begin{prop}\label{propo_ovvia} Let $\pi :X \to Y$  be a  quotient map between a metric space and a topological space, $\phi: Y\to X$ be a section of $\pi$, $\alpha \in (0,1)$ and $L>0$.
Then $\phi$ is intrinsically $(L, \alpha)$-H\"older if and only if
\begin{equation*}
 \phi  (Y) \cap  R_{x,L} = \emptyset , \quad \mbox{for all } x \in \phi (Y),
\end{equation*}
where $$R_{x,L} := \left\{ x'\in X \;|\;   L d(x', \pi ^{-1} (\pi (x)))^\alpha + d(x', \pi ^{-1} (\pi (x)))  <   d(x', x)\right\}.$$


%
\end{prop}

Proposition \ref{propo_ovvia} is a triviality, still its purpose is to stress the analogy with the intrinsically Lipschitz sections theory introduced in \cite{DDLD21} when $\alpha =1$.  In particular, the sets $R_{x,L}$ are the intrinsic cones in the sense of  Franchi, Serapioni and Serra Cassano when $X$ is a  subRiemannian Carnot group and $\alpha =1.$ The reader can see \cite{D22.3} for a good notion of intrinsic cones in metric groups.

Definition \ref{Intrinsic H\"older  section} it is very natural if we think that what we are studying graphs of appropriate maps. However, in the following proposition, we introduce an equivalent condition of \eqref{Intrinsic H\"older per semplificare} when $Y$ is a bounded set.

\begin{prop}\label{Intrinsic H\"older  section.2}
Let $\pi: X \to Y$ be a quotient map between a metric space $X$ and a topological and bounded space $Y$ and let $\alpha \in (0,1).$ The following are equivalent:
\begin{enumerate}
\item there is $L >0$ such that
\begin{equation*}
d(\phi (y_1), \phi (y_2)) \leq L d(\phi (y_1), \pi ^{-1} (y_2))^\alpha +  d(\phi (y_1), \pi ^{-1} (y_2)), \quad \mbox{for all } y_1, y_2 \in Y.
\end{equation*}
\item there is $K \geq 1$ such that
\begin{equation}\label{equationEQUIV}
d(\phi (y_1), \phi (y_2)) \leq K d(\phi (y_1), \pi ^{-1} (y_2))^\alpha , \quad \mbox{for all } y_1, y_2 \in Y.
\end{equation}
\end{enumerate}
\end{prop}

  \begin{proof}  

$(1) \Rightarrow (2).$ This is trivial when $d(\phi (y_1), \pi ^{-1} (y_2)) \leq 1$. On the other hand, if we consider $y_1, y_2 \in Y$ and $\bar x \in X$ such that $d(\phi (y_1), \pi ^{-1} (y_2)) = d(\phi (y_1), \bar x) >1,$ then it is possible to consider $\ell$ equidistant points $x_1,\dots , x_\ell \in X$ such that $$d(\phi (y_1), \bar x) =d(\phi (y_1), x_1)+ \sum _{i=1}^{\ell -1}d(x_i, x_{i+1} ) + d(x_\ell , \bar x),$$ with $d(\phi (y_1), x_1)= d(x_i,x_{i+1})= d(x_\ell , \bar x) \in (\frac 1 2 , 1).$ Here, $\ell \leq  \lfloor d(\phi (y_1), \bar x) \rfloor  +1$ depends on $y_1,y_2.$ Here $ \lfloor  k \rfloor  $ is the integer part of $k.$ However, it is possible to choose $k\in \R ^+$ defined as
\begin{equation}\label{costuniversale}
k:= \sup _{y_1,y_2 \in Y}  d(\phi (y_1), \pi ^{-1} (y_2)),
\end{equation}
such that $k$ not depends on the points and $\ell \leq k.$
 We notice that this constant is finite because, on the contrary, we get the contradiction $\infty= d(\phi (y_1), \pi ^{-1} (y_2))  \leq d(\phi (y_1), \phi (y_2)).$ Hence,
\begin{equation*}
\begin{aligned}
d(\phi (y_1), \phi (y_2)) & \leq L d(\phi (y_1), \bar x)^\alpha +  d(\phi (y_1), \bar x)\\
& = L d(\phi (y_1), \bar x)^\alpha + d(\phi (y_1), x_1)+ \sum _{i=1}^{\ell -1}d(x_i, x_{i+1} ) + d(x_\ell , \bar x)\\
& \leq L d(\phi (y_1), \bar x)^\alpha + d(\phi (y_1), x_1)^\alpha + \sum _{i=1}^{\ell -1}d(x_i, x_{i+1} )^\alpha  + d(x_\ell , \bar x)^\alpha \\
& \leq (L+3(\lfloor  k \rfloor +1)) d(\phi (y_1), \bar x)^\alpha \\
&=: Kd(\phi (y_1), \bar x)^\alpha .
\end{aligned}
\end{equation*}

$(2) \Rightarrow (1).$ This is a trivial implication.

\end{proof}

\begin{defi}[Intrinsic H\"older  with respect to  a section]\label{defwrtpsinew}
 Given  sections 
  $\phi, \psi :Y\to X$   
  of $\pi$. We say that   $\phi $ is {\em intrinsically $(L, \alpha)$-H\"older with respect to  $\psi$ at point $\hat x$}, with $L>0, \alpha \in (0,1)$ and $\hat x\in X$, if
\begin{enumerate}
\item $\hat x\in \psi(Y)\cap \phi (Y);$
\item $\phi  (Y) \cap  C_{\hat x,L}^{\psi} = \emptyset ,$
\end{enumerate}
where
$$ C_{\hat x,L}^{\psi} := \{x\in X \,:\,  d(x, \psi (\pi (x))) > L d(\hat x, \psi (\pi (x)))^\alpha + d(\hat x, \psi (\pi (x))) \}.   $$
\end{defi}

  \begin{rem} Definition~\ref{defwrtpsinew} can be rephrased as follows.
 A section $\phi  $ is intrinsically $(L,\alpha)$-H\"older with respect to  $\psi$ at point $\hat x$ if
 and only if 
 there is $\hat y\in Y$ such that  $\hat x= \phi (\hat y)=\psi(\hat y)$ and
\begin{equation}\label{defintrlipnuova}
 d(x, \psi (\pi (x))) \leq L d(\hat x, \psi (\pi ( x)))^\alpha + d(\hat x, \psi (\pi (x))), \quad \forall x \in \phi (Y), 
\end{equation}
which equivalently means 
\begin{equation}\label{equation28.0}
 d(\phi (y), \psi (y)) \leq L d(\psi(\hat y), \psi (y))^\alpha +d(\psi(\hat y), \psi (y)) ,\qquad \forall y\in Y. 
\end{equation}
  \end{rem}  

Notice that Definition \ref{defwrtpsinew} does not induce an equivalence relation because of lack of symmetry in the right-hand side of \eqref{equation28.0}. In  Section \ref{An equivalence relation} we give a stronger definition in order to obtain an equivalence relation.

The proof of Proposition $\ref{linkintrinsicocneelip}$ is an immediately consequence of the following result.

 \begin{prop}\label{propIntHoldervswrt}
   Let $X  $ be a metric space, $Y$ a topological and bounded space, and $\pi :X \to Y$   a  quotient map. Let $L\geq1$ and $y_0\in Y$. Assume $\phi_0:Y\to X$ is an intrinsically $(L,\alpha )$-H\"older section of $\pi$. Let $\phi :Y\to X$ be a section of $\pi$
   such that 
   $x_0:=\phi (y_0)=\phi _0(y_0).$
   Then the following are equivalent:
   \begin{enumerate}
\item  For some $L_1\geq1$ and $\beta \in (0,1)$, $\phi $ is intrinsically $(L_1,\beta)$-H\"older with respect to  $\phi_0$ at   $x_0;$
\item  For some $L_2\geq1$ and $\gamma \in (0,1)$, $\phi $ satisfies   
\begin{equation}\label{equation3nov2021}
d(x_0,\phi (y)) \leq L_2d(x_0, \pi^{-1} (y))^\gamma, \quad \forall y\in Y.
\end{equation}
\end{enumerate}
   Moreover, the constants $L_1$ and $L_2 $ are quantitatively related in terms of $L$.
   \end{prop}

    \begin{proof}  We begin recall that, by Proposition \ref{Intrinsic H\"older  section.2}, \eqref{IntrinsicPROERTY} is equivalent to \eqref{equationEQUIV}.
   
$(1) \Rightarrow (2).$ For every $y\in Y,$ it follows that
     \begin{equation*}
\begin{aligned}
d(\phi (y), x_0) & \leq d(\phi (y), \phi _0(y))  + d(\phi _0(y),x_0)  \\
& \leq L_1d(\phi _0(y), x_0)^\beta +  d(\phi _0(y),x_0)  \\
& \leq L_1Ld(x_0, \pi^{-1} (y))^{\beta \alpha } + Ld(x_0, \pi^{-1} (y))^\alpha   \\
& \leq L(L_1+1) \max\{ d(x_0, \pi^{-1} (y))^{\beta \alpha }, d(x_0, \pi^{-1} (y))^\alpha  \} \\
\end{aligned}
\end{equation*}
   where in the first inequality we used the triangle inequality, and in the second one the intrinsic H\"older property (1) of $\phi.$ Then, in the third inequality we used the intrinsic H\"older property of $\phi_0.$ Here, noticing that $\beta \alpha <\alpha ,$ we have that if $d(x_0, \pi^{-1} (y)) \leq 1,$ then $\max\{ d(x_0, \pi^{-1} (y))^{\beta \alpha }, d(x_0, \pi^{-1} (y))^\alpha  \} =d(x_0, \pi^{-1} (y))^{\beta \alpha }.$ On the other hand, if $d(x_0, \pi^{-1} (y)) > 1,$ then  using a similar technique using in Proposition \ref{Intrinsic H\"older  section.2} we obtain the same maximum with additional constant $K:= L+3(\lfloor  k \rfloor +1)$ where $k\in \R$ is given by \eqref{costuniversale}. Definitely,
       \begin{equation*}
\begin{aligned}
d(\phi (y), x_0) & \leq LK(L_1+1)  d(x_0, \pi^{-1} (y))^{\beta \alpha }.\\
\end{aligned}
\end{equation*} 
   
      $(2) \Rightarrow (1).$   For every $y\in Y,$ we have that
\begin{equation*}
\begin{aligned}
d(\phi (y), \phi _0(y)) & \leq d(\phi (y),x_0) +  d( x_0, \phi _0 (y)) \\
& \leq L_2  d(  \phi _0 (y)  ,  x_0)^\gamma +  d(  \phi _0 (y)  ,  x_0),
\end{aligned}
\end{equation*}
  where in the first equality we used the triangle inequality, and in the second one we used \eqref{equation3nov2021}.  (Here it is better to use the first definition of intrinsically H\"older sections).
   \end{proof}

It is easy to see that if $\alpha =1,$ then we get $\beta =\gamma$ and so we have the following corollary.
 \begin{coroll}
  Let $X  $ be a metric space, $Y$ a topological space, $\pi :X \to Y$   a  quotient map, $L\geq 1$ and $\beta \in (0,1)$.
Assume that   every point $x\in X$ is contained in the image of an intrinsic $L$-Lipschitz section $\psi_x$ for $\pi$.
 Then for every section $\phi :Y\to X$ of $\pi$ the following are equivalent:
   \begin{enumerate}
\item for all $x\in\phi(Y)$ the section $\phi $ is intrinsically $(L_1,\beta )$-H\"older with respect to  $\psi_x$ at   $x;$
\item  the section $\phi $  is intrinsically $(L_2,\beta)$-H\"older.
\end{enumerate}
   \end{coroll}

\subsection{Continuity } An intrinsically $(L,\alpha )$-H\"older  section $\phi :Y \to X$ of $\pi$ is a continuous section. Indeed, fix a point $y\in Y$. Since $\pi$ is open, for every $\varepsilon \in (0,1)$ and every $x\in X$ such that  $x=\phi (y)$ we know that there is an open neighborhood $U_\varepsilon $ of $\pi (x)=y$ such that
\begin{equation*}
U_\varepsilon \subset \pi (B(x,[\varepsilon /(L+1)]^{1/\alpha} )).
\end{equation*}
Hence, if $y'\in U _\varepsilon $ then there is $x'\in B(x, [\varepsilon /(L+1)]^{1/\alpha} )$ such that $\pi(x')=y'$. That means $x'\in \pi^{-1} (y')$ and, consequently,
\begin{equation*}
d(\phi (y), \phi (y')) \leq L d(\phi (y), \pi ^{-1} (y'))^\alpha +d(\phi (y), \pi ^{-1} (y'))  \leq (L+1)d(x,x')^\alpha \leq \varepsilon , 
\end{equation*} 
i.e., $\phi (U_\varepsilon) \subset B(x,\varepsilon ).$

\section{Properties of linear and quotient map} In order to give some relevant properties as convexity and being vector space over $\R$ we need to ask that $\pi$ is also a linear map. Notice that this fact is not too restrictive because in our idea $\pi$ is the 'usual' projection map. More precisely, throughout the section we will consider $\pi$ a linear and quotient map between a normed space $X$ and  a topological space $Y$.  

\subsection{Basic properties}
In this section we give two simple results in the particular case when $\pi$ is a linear map. 
 \begin{prop}\label{propconvex24}  
Let $\pi: X\to Y$ be a linear and quotient map between a metric space $X$ and a topological space $Y.$ The set of all section of $\pi$ is a convex set.
\end{prop}

 \begin{proof} 
Fix $t\in [0,1]$ and let $\phi , \psi :Y \to X$ sections of $\pi.$ By the simply fact
\begin{equation*}
\begin{aligned}
\pi (t\phi (y) + (1-t) \psi (y)) = t\pi (\phi (y)) + (1-t) \pi (\psi (y)) =y,
\end{aligned}
\end{equation*}
we get the thesis.
\end{proof}

 \begin{prop}\label{propconvex30}  
Let $\pi: X\to Y$ be a linear and quotient map between a normed space $X$ and a topological space $Y.$ If $\phi: Y \to X$ is an intrinsically H\"older section of $\pi,$ then for any $\lambda \in \R-\{0\}$ the section $\lambda \phi$ is also intrinsic H\"older for $1/\lambda \pi$ with the same Lipschitz constant up to the constant $|\lambda |^{1-\alpha }$. 
\end{prop}

 \begin{proof} 
Fix $\lambda \in \R-\{0\}.$ The fact that $\lambda \phi $ is a section is trivial using the similar argument of Proposition \ref{propconvex24}. On the other hand, for any $y_1, y_2 \in Y$
  \begin{equation*}
\begin{aligned}
\| \lambda \phi (y_1) -\lambda \phi (y_2) \| \leq |\lambda | L d( \phi (y_1),  \pi^{-1} (y_2) )^\alpha  = |\lambda |^{1-\alpha } Ld(\lambda \phi (y_1),  (1/\lambda \pi)^{-1} (y_2)) ^\alpha ,
\end{aligned}
\end{equation*}
i.e., the thesis holds. This fact follows by these observations:
\begin{enumerate}
\item  if $ d( \phi (y_1),  \pi^{-1} (y_2) )=  d( \phi (y_1), a)$ then $ |\lambda |^\alpha d( \phi (y_1),  \pi^{-1} (y_2) ) ^\alpha = \|\lambda \phi (y_1)-\lambda a\| ^\alpha .$
\item $\lambda a \in \pi ^{-1} (\lambda y).$
\item $ \pi ^{-1} (\lambda y)=  (1/\lambda \pi)^{-1} ( y).$
\end{enumerate}
The second point is true because using the linearity of $\pi$ we have that $\pi (\lambda a) = \lambda \pi(a)=\lambda y.$ Finally, the third point holds because
\begin{equation*}
 \pi^{-1} (\lambda y) =\{ x \in X \,:\,  \pi (x) = \lambda y\} = \{ x \in X \,:\, 1/ \lambda \pi (x) =  y\}= (1/ \lambda \pi)^{-1}  (y),
\end{equation*}
as desired.
\end{proof}
  
   \subsection{Convex set} In this section we show that the set of all intrinsically H\"older sections is a convex set. We underline that the hypothesis on boundness of $Y$ is not necessary.
       \begin{defi}[Intrinsic H\"older set with respect to $\psi$]\label{defwrtpsinew.9apr.23} Let $\alpha \in (0,1]$ and $\psi: Y \to X$ a section of $\pi$.  We define the set  of all  intrinsically H\"older  sections  of $\pi$ with respect to  $\psi$ at point $\hat x$ as
\begin{equation*}
\begin{aligned}
H _{\psi , \hat x, \alpha} & :=\{ \phi :Y\to X \mbox{ a section of $\pi$}  \, :\, \phi \mbox{ is intrinsically $(\tilde L, \alpha )$-H\"older w.r.t. $\psi$ at point $\hat x$} \\
& \quad \quad \mbox{  for some $\tilde L>0$} \}.
 \end{aligned}
\end{equation*}
  \end{defi}
  
%
%
%

    \begin{prop}\label{propLeibnitz formula for slope} 
Let $\pi :X \to Y$ be a linear and quotient map with $X$ a normed space and $Y$ a topological space. Assume also that $\alpha \in (0,1],$ $\psi: Y \to X$ a section of $\pi$ and $\hat x \in \psi (Y).$ Then, the set $H _{\psi , \hat x, \alpha} $ is a convex set.
\end{prop}

 \begin{proof} Let $\phi , \eta \in H _{\psi , \hat x, \alpha} $ and let $t\in [0,1].$ We want to show that $$w := t\phi + (1-t)\eta \in H _{\psi , \hat x, \alpha}.$$ 
Notice that, by Proposition \ref{propconvex24},  $w$ is a section of $\pi$ and $w (\bar y)= \phi (\bar y)=\eta (\bar y)=\hat x$ for some $\bar y \in Y.$ On the other hand, for every $y \in Y$ we have
\begin{equation*}
\begin{aligned}
\|w (y)-\psi ( y)\|  & =\| t(\phi (y)- \psi (y)) + (1-t) (\eta (y)-\psi(y))\|,
\end{aligned}
\end{equation*}
and so
\begin{equation*}
\begin{aligned}
\|w (y)-\psi ( y)\| & \leq t \|\phi (y)-\psi ( y)\|+ (1-t) \|\eta (y)-\psi( y)\|.
\end{aligned}
\end{equation*}
Hence, 
 \begin{equation*}
\begin{aligned}
d(w (y),\psi  (y)) & \leq tL_\phi  d(\psi(\bar y), \psi (y))^\alpha + (1-t)L_\psi  d(\psi(\bar y), \psi (y))^\alpha + d(\psi(\bar y), \psi (y))\\ & = [t(L_\phi -L_\psi) + L_\psi ]  d(\psi(\bar y), \psi (y))^\alpha +d(\psi(\bar y), \psi (y)), \\
\end{aligned}
\end{equation*}
for every $y\in Y,$ as desired.
\end{proof}

\subsection{Vector space} 
In this section we show that a 'large' class of intrinsically H\"older sections is a vector space over $\R$ or $\C.$   Notice that  it is no possible to obtain the statement for $H_{\psi , \hat x, \alpha }$ since the simply observation that if $\psi (\hat y) = \hat x$ then $\psi (\hat y ) + \psi (\hat y )  \ne \hat x.$ 
   \begin{theorem}\label{theorem24} Let $\pi :X \to Y$ is a linear and quotient map from a normed space $X$ to a topological space $Y.$ Assume also that $\psi :Y \to X$ is a section of $\pi$  and $\{\lambda \hat x\, :\, \lambda \in \R^+\} \subset X$ with $\hat x \in \psi (Y).$
   
Then, for any $\alpha \in (0,1],$ the set $\bigcup _{\lambda \in \R^+} H _{\lambda \psi , \lambda \hat x , \alpha} \cup \{ 0\}$ is a vector space over $\R $ or $\C .$
       \end{theorem}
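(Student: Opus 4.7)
The plan is to leverage the fact that the ambient space $X^Y$ of all maps $Y\to X$ is already a vector space under pointwise operations, so that all axioms except closure are automatic; it suffices to verify that $V:=\bigcup_\lambda H_{\lambda\psi,\lambda\hat x,\alpha}\cup\{0\}$ is closed under addition and scalar multiplication. A preliminary observation fixes a canonical basepoint: since $\psi$ is a section, $\hat y:=\pi(\hat x)$ is the unique point with $\psi(\hat y)=\hat x$, and condition (1) of Definition~\ref{defwrtpsinew} forces every $\phi\in H_{\lambda\psi,\lambda\hat x,\alpha}$ to satisfy $\phi(\hat y)=\lambda\hat x=\lambda\psi(\hat y)$. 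Thus all members of $V$ agree in a compatible way at the single distinguished point $\hat y$, which is what makes their sums and scalar multiples have a well-defined basepoint.

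For additive closure, fix $\phi\in H_{\lambda_1\psi,\lambda_1\hat x,\alpha}$ with constant $L_1$ and $\eta\in H_{\lambda_2\psi,\lambda_2\hat x,\alpha}$ with constant $L_2$, and set $\mu:=\lambda_1+\lambda_2$. For every $y\in Y$, the triangle inequality in the normed space $X$ gives
\[
\|(\phi+\eta)(y)-\mu\psi(y)\|\leq\|\phi(y)-\lambda_1\psi(y)\|+\|\eta(y)-\lambda_2\psi(y)\|.
\]
Each summand is bounded via the equivalent reformulation \eqref{equation28.0} of the intrinsic H\"older condition applied to $\phi$ with the scaled section $\lambda_1\psi$ (respectively $\eta$ with $\lambda_2\psi$); using the homogeneity $\|\lambda_i\psi(\hat y)-\lambda_i\psi(y)\|=|\lambda_i|\,\|\psi(\hat y)-\psi(y)\|$ and comparing with the target estimate $\tilde L\,|\mu|^\alpha\|\psi(\hat y)-\psi(y)\|^\alpha+|\mu|\,\|\psi(\hat y)-\psi(y)\|$, one can read off an acceptable constant
\[
\tilde L=\frac{L_1|\lambda_1|^\alpha+L_2|\lambda_2|^\alpha}{|\mu|^\alpha}+\frac{|\lambda_1|+|\lambda_2|-|\mu|}{|\mu|^\alpha},
\]
which shows $\phi+\eta\in H_{\mu\psi,\mu\hat x,\alpha}$ whenever $\mu\neq 0$. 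The edge case $\mu=0$ is absorbed into the singleton $\{0\}$ summand of $V$.

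For scalar-multiplicative closure, given $\mu$ in the scalar field and $\phi\in H_{\lambda\psi,\lambda\hat x,\alpha}$ with constant $L$, the identity $\|\mu\phi(y)-\mu\lambda\psi(y)\|=|\mu|\,\|\phi(y)-\lambda\psi(y)\|$ combined with \eqref{equation28.0} for $\phi$ produces the analogous bound for $\mu\phi$ with respect to $\mu\lambda\psi$ at $\mu\lambda\hat x$, with new constant essentially $|\mu|^{1-\alpha}L$, exactly in the spirit of Proposition~\ref{propconvex30}; the case $\mu=0$ trivially gives $\mu\phi=0\in V$.

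The main obstacle is interpretational rather than computational: for a genuine vector space over $\R$ (or $\C$) one needs $\phi\in V\Rightarrow-\phi\in V$, which requires the index set of the union to be closed under sign change. I read the notation $\R^+$ in this wider sense, consistent with Proposition~\ref{propconvex30} where $\lambda$ ranges over $\R\setminus\{0\}$. Once this convention is fixed, the two closure calculations above, together with the pointwise vector space structure inherited from $X^Y$, deliver all eight vector space axioms and complete the proof.
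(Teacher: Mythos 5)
Your additive and scalar-multiplication computations for $\lambda_1,\lambda_2\in\R^+$ reproduce the paper's argument almost verbatim (the paper simply normalises to $\delta_1=\delta_2=1$ ``for simplicity''), so up to that point you and the paper are on the same track. The paper's proof of closure under negative scalars is likewise glossed over (``in a similar way, it is possible to show the second point''), and you are right to flag that $-\phi$ lands in $H_{-\lambda\psi,-\lambda\hat x,\alpha}$, which is outside $\bigcup_{\lambda\in\R^+}$.

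The gap is in the fix you propose. If you enlarge the index set to $\R\setminus\{0\}$ to recover $-\phi\in V$, additive closure breaks for opposite signs, and your claimed constant $\tilde L$ does not actually produce the required inequality. Adding the two bounds from \eqref{equation28.0} with $r:=\|\psi(\hat y)-\psi(y)\|$ gives
\begin{equation*}
\|(\phi+\eta)(y)-\mu\psi(y)\|\ \le\ \bigl(L_1|\lambda_1|^\alpha+L_2|\lambda_2|^\alpha\bigr)\,r^\alpha\ +\ \bigl(|\lambda_1|+|\lambda_2|\bigr)\,r,
\end{equation*}
while membership in $H_{\mu\psi,\mu\hat x,\alpha}$ requires a bound of the form $\tilde L\,|\mu|^\alpha r^\alpha+|\mu|\,r$, in which the coefficient of the linear term is forced to be exactly $|\mu|$. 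When $\lambda_1,\lambda_2$ have opposite signs one has $|\lambda_1|+|\lambda_2|>|\mu|$, and the surplus linear term $(|\lambda_1|+|\lambda_2|-|\mu|)\,r$ cannot be absorbed into an $r^\alpha$ term for all $r$ (only for $r\le 1$, i.e., only if $Y$ were bounded, which is not assumed here). Your second summand $(|\lambda_1|+|\lambda_2|-|\mu|)/|\mu|^\alpha$ in $\tilde L$ contributes $(|\lambda_1|+|\lambda_2|-|\mu|)\,r^\alpha$, which is strictly smaller than the surplus for $r>1$. A concrete failure: in $X=\R^2$, $Y=\R$, $\pi(x_1,x_2)=x_1$, $\psi(y)=(y,0)$, $\hat x=(1,0)$, take $\phi(y)=(2y,\,2|1-y|)\in H_{2\psi,2\hat x,\alpha}$ and $\eta=-\psi\in H_{-\psi,-\hat x,\alpha}$; then $(\phi+\eta)(y)=(y,\,2|1-y|)$ and $2|1-y|\le\tilde L|1-y|^\alpha+|1-y|$ fails for $|1-y|$ large. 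Finally, the edge case $\mu=0$ is not ``absorbed into $\{0\}$'': taking $\phi\in H_{\psi,\hat x,\alpha}$ with $\phi\neq\psi$ and $\eta=-\psi\in H_{-\psi,-\hat x,\alpha}$ yields $\phi+\eta=\phi-\psi\neq 0$, and this map satisfies $\pi\circ(\phi-\psi)\equiv 0$, so it is not a section of any $\tfrac1\lambda\pi$ and lies in none of the $H_{\lambda\psi,\lambda\hat x,\alpha}$.

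So the situation is: your proof for the positive cone is correct and is the paper's proof; your attempted repair of the negation problem introduces a genuine additive-closure failure which your stated constant does not fix. The honest conclusion is that the set in the theorem as written (over $\R^+$) is a convex cone closed under addition and positive dilation, and that making it into a vector space over $\R$ requires either the boundedness of $Y$ (so that $r\le 1$ after rescaling and the excess linear term can be traded for an $r^\alpha$ term) or a different notion of ``vector space'' than the usual one; neither your proposal nor the paper's sketch addresses this.
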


\begin{proof} 
Let $\phi , \eta \in \bigcup_{\lambda \in \R ^+} H_{\lambda \psi , \lambda \hat x , \alpha}$ and $\beta  \in \R  -\{0\}.$ We want to show that  
\begin{enumerate}
\item $w= \phi + \eta \in \bigcup_{\lambda \in \R ^+} H_{\lambda \psi , \lambda \hat x, \alpha }.$
\item $\beta \phi \in \bigcup_{\lambda \in \R ^+} H_{\lambda \psi , \lambda \hat x, \alpha }.$
\end{enumerate}

(1). If $\phi  \in H_{\delta_1 \psi , \delta _1 \hat x , \alpha }$ and $ \eta \in  H _{\delta _2 \psi , \delta _2 \hat x, \alpha }$ for some $\delta _1, \delta _2 \in \R^+$ it holds $$w \in  H_{(\delta _1 + \delta _2) \psi , ( \delta _1 + \delta _2) \hat x}.$$ 
For simplicity, we choose $\phi , \eta \in  H_{\psi , \hat x , \alpha }$ and so it remains to prove $$w \in  H_{2 \psi , 2 \hat x , \alpha }.$$

By linear property of $\pi$, $w$ is a section of $1/2 \pi .$ On the other hand, if $\psi (\bar y) = \hat x,$ then  $w(\bar y)= \phi (\bar y)+ \eta(\bar y) = 2 \psi (\bar y) \in X.$ Moreover, using \eqref{equation28.0}, we deduce 
\begin{equation*}
\begin{aligned}
\|w(y)- 2\psi (y)\| & = \| \phi ( y)+ \eta( y) - 2 \psi (y)\|\\
& \leq \|\phi ( y)- \psi (y)\| +  \|\eta ( y)- \psi (y)\|\\
& \leq 2\max\{L_\phi , L_\eta\} \|\psi (\bar y)- \psi (y)\|^\alpha  +2 \|\psi (\bar y)- \psi (y)\| \\
& = 2 ^{1-\alpha } \max\{L_\phi , L_\eta\} \| 2 \psi (\bar y) -2 \psi ( y)\| ^\alpha + \| 2\psi (\bar y)-2 \psi (y)\|,
\end{aligned}
\end{equation*}
for any $y \in Y,$ as desired.

(2). In a similar way, it is possible to show the second point.
\end{proof}

   \begin{rem} Theorem \ref{theorem24} holds also if we consider $\lambda \in \R^-$ or  $\lambda \in \R$ instead of $\R^+.$
       \end{rem}
       
       
\section{An equivalence relation}\label{An equivalence relation} In this section $X$ is a metric space, $Y$ a topological space and $\pi:X \to Y$ a quotient map (we do $not$ ask that $\pi$ is a linear map). 
 We stress that Definition~\ref{defwrtpsinew} does not induce  an equivalence relation, because of lack of symmetry in the right-hand side of \eqref{equation28.0}. As a consequence we must ask a stronger condition  in order to obtain  an equivalence relation.
\begin{defi}[Intrinsic H\"older  with respect to  a section in strong sense]\label{defwrtpsinew.1}
 Given  sections   $\phi, \psi :Y\to X$   of $\pi$. We say that   $\phi $ is {\em intrinsically $(L,\alpha)$-H\"older with respect to  $\psi$ at point $\hat x$ in strong sense}, with $L >0, \alpha \in (0,1]$ and $\hat x\in X$, if
\begin{enumerate}
\item $\hat x\in \psi(Y)\cap \phi (Y);$
\item it holds
\begin{equation}\label{equation11aprile}
 d(\phi (y), \psi (y)) \leq \min\{ L d(\psi(\hat y), \psi (y))^\alpha + d(\psi(\hat y), \psi (y)) , L d(\psi(\hat y), \phi (y))^\alpha + d(\psi(\hat y), \phi (y))\} ,
\end{equation} for every $y\in Y.$
\end{enumerate}
\end{defi}

  Now we are able to give the main theorem.
  
   \begin{theorem}\label{theorem24aprile}
   Let $\alpha \in (0,1]$ and $\pi :X \to Y$ be a quotient map from a metric space $X$ to a topological space $Y.$ Assume also that $\psi :Y \to X$ is a  section of $\pi$ and $\hat x \in X.$ Then, being intrinsically H\"older with respect to $\psi$ at point $\hat x$ in strong sense induces an equivalence relation. We will write the class of equivalence of $\psi$ at point $\hat x$ as
    \begin{equation*}
\begin{aligned}
[H_{\psi , \hat x, \alpha}]& :=\{\phi :Y\to X \, \mbox{a section of $\pi$}  :\, \phi \mbox{ is intrinsically $(\tilde L, \alpha)$-H\"older with respect to $\psi$ at}\\ 
& \qquad \mbox{point $\hat x$ in strong sense, for some $\tilde L>0$} \}.
\end{aligned}
\end{equation*}
   
      \end{theorem}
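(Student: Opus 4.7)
The plan is to verify the three axioms of an equivalence relation: reflexivity, symmetry, and transitivity, where two sections $\phi,\eta$ are declared equivalent whenever there exists some constant $L>0$ making the strong-sense condition \eqref{equation11aprile} hold.

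Reflexivity is immediate. For any section $\psi$ with $\hat x\in\psi(Y)$ and any $L>0$, Definition \ref{defwrtpsinew.1} applied to $\psi$ versus itself reduces to $0=d(\psi(y),\psi(y))\le\min\{\cdots\}$, which is trivial because the right-hand side is non-negative. Symmetry is essentially built into the strong-sense definition. If $\phi$ is intrinsically $(L,\alpha)$-H\"older with respect to $\psi$ at $\hat x$ in strong sense with common base $\hat y$ (so $\psi(\hat y)=\hat x=\phi(\hat y)$), interchanging $\phi$ and $\psi$ leaves \eqref{equation11aprile} unchanged: the left-hand side $d(\phi(y),\psi(y))$ is symmetric in $\phi,\psi$, and the right-hand side, being the minimum of two expressions in which $\phi$ and $\psi$ occur symmetrically, is invariant under the swap. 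Hence $\psi$ is intrinsically $(L,\alpha)$-H\"older with respect to $\phi$ at $\hat x$ in strong sense with the same constant.

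For transitivity, suppose $\phi$ is intrinsically $(L_1,\alpha)$-H\"older with respect to $\psi$ and $\eta$ is intrinsically $(L_2,\alpha)$-H\"older with respect to $\phi$, both at $\hat x$ in strong sense. Since $\pi\circ\psi=\pi\circ\phi=\pi\circ\eta=\mathrm{id}_Y$ and $\hat x\in\psi(Y)\cap\phi(Y)\cap\eta(Y)$, there is a single point $\hat y=\pi(\hat x)$ such that $\psi(\hat y)=\phi(\hat y)=\eta(\hat y)=\hat x$, so the three sections share a common base. I would then begin from
\[
d(\eta(y),\psi(y))\le d(\eta(y),\phi(y))+d(\phi(y),\psi(y))
\]
and apply the appropriate form of \eqref{equation11aprile} to each summand. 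To produce the estimate in \eqref{equation11aprile} for the pair $(\eta,\psi)$ in terms of $d(\hat x,\psi(y))$, I would use the $\psi(y)$-form of the $\phi$-versus-$\psi$ inequality for the second summand and the $\phi(y)$-form of the $\eta$-versus-$\phi$ inequality for the first summand, then control $d(\hat x,\phi(y))$ in terms of $d(\hat x,\psi(y))$ via one more triangle inequality combined with the $\psi(y)$-form of the $\phi$-versus-$\psi$ estimate, and finally invoke subadditivity $(a+b)^\alpha\le a^\alpha+b^\alpha$ to collect terms. This yields \eqref{equation11aprile} for the pair $(\eta,\psi)$ with a new constant $L_3=L_3(L_1,L_2,\alpha)$; the estimate in terms of $d(\hat x,\eta(y))$ follows from the analogous chain with the roles of $\psi$ and $\eta$ interchanged, using the $\eta(y)$-form of the $\eta$-versus-$\phi$ inequality in place of the $\phi(y)$-form.

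The main technical point is transitivity: chaining the modulus $g_L(r)=Lr^\alpha+r$ is delicate because it has two regimes (H\"older-like for small $r$, linear for large $r$), and a naive composition tends either to degrade the exponent toward $\alpha^2$ or to inflate the coefficient of the linear part beyond $1$. The strong-sense definition, which requires both bounds simultaneously through the minimum in \eqref{equation11aprile}, is tailored precisely to ensure that the appropriate form of the estimate is always available at each step of the chain, so that the composed bound can be reorganized back into the shape $L_3 r^\alpha+r$.
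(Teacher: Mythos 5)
Your three-step outline---reflexivity trivial, symmetry read off from the $\min$, transitivity via the triangle inequality and the two arms of the $\min$---matches the paper's own approach, so at that level you are on the right track. You have also correctly isolated the danger in the transitivity step: composing moduli of the form $g_L(r) = Lr^\alpha + r$ tends to degrade the exponent toward $\alpha^2$ and push the coefficient of the linear term above $1$.

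The gap is that you then merely assert, without computation, that the strong-sense definition is ``tailored precisely to ensure that \dots the composed bound can be reorganized back into the shape $L_3 r^\alpha + r$.'' If you actually carry out the very steps you list---the $\psi(y)$-form of the $\phi$-vs-$\psi$ bound for one summand, the $\phi(y)$-form of the $\eta$-vs-$\phi$ bound for the other, a further triangle inequality to relate $d(\hat x,\phi(y))$ to $d(\hat x,\psi(y))$, and $(a+b)^\alpha\le a^\alpha+b^\alpha$---you obtain, with $r=d(\hat x,\psi(y))$, a right-hand side of the form $C\,r^{\alpha^2}+C'\,r^\alpha+3r$. For $\alpha<1$ the term $r^{\alpha^2}$ \emph{dominates} $r^\alpha$ as $r\to 0$, and the coefficient $3$ on the linear part exceeds $1$; neither defect fits the template $L_3 r^\alpha+r$, and choosing the other arm of the $\min$ only transfers the same defect from $\psi(y)$ to $\eta(y)$. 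So the assertion that ``the appropriate form of the estimate is always available'' is not substantiated and, with the chain you sketch, it is in fact false. (For what it is worth, the paper's own transitivity argument is equally terse at this point---it writes $\tilde L=2\max\{L_1,L_2\}$ and the target $\min$ without explaining how the reference section can be switched without loss---so the issue is shared; but since you have explicitly raised it, you need either to supply a genuinely new device that does control the composed modulus, or to confine the claim to $\alpha=1$, where $r^{\alpha^2}=r^\alpha=r$ and the absorption is immediate.)
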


An interesting observation is that, considering $H_{\psi , \hat x, \alpha },$ the intrinsic constants $L$ can be change but it is fundamental that the point $\hat x$ is a common one for the every section. 

   \begin{proof}
We need to show:
\begin{enumerate}
\item reflexive property;
\item symmetric property;
\item transitive property;
\end{enumerate}

   (1). It is trivial that $\phi \backsim \phi.$
   
   (2). If $\phi \backsim \psi,$ then $\psi \backsim \phi .$ This follows from \eqref{equation11aprile}.
   
   (3). We know that $\phi \backsim \psi$ and $\psi \backsim \eta . $ Hence, $\hat x= \phi (\hat y)= \psi (\hat y)= \eta (\hat y).$ Moreover,  by \eqref{equation11aprile}, it holds
   \begin{equation*}\label{equation11aprile.0}
   \begin{aligned}
 d(\phi (y), \psi (y)) & \leq  \min\{L_1  d(\psi(\hat y), \psi (y))^\alpha + d(\psi(\hat y), \psi (y)), L_1 d(\psi(\hat y), \phi (y))^\alpha +d(\psi(\hat y), \phi (y))\},\\
  d(\psi (y), \eta (y)) & \leq \min\{   L_2 d(\eta(\hat y), \eta (y))^\alpha +  d(\eta(\hat y), \eta (y)),  L_2 d(\eta(\hat y), \psi (y))^\alpha + d(\eta(\hat y), \psi (y))\},\\
  \end{aligned}
\end{equation*}
for any $ y\in Y$ and, consequently, if $\tilde L =2 \max\{ L_1, L_2\},$ then
   \begin{equation}\label{equation11aprile.0}
   \begin{aligned}
 d(\phi (y), \psi (y)) & \leq  d(\phi (y), \psi (y)) +  d(\psi (y), \eta (y)) \\
  & \leq  \min\{  \tilde L d(\eta(\hat y), \eta (y))^\alpha + d(\eta(\hat y), \eta (y)),  \tilde L d(\psi(\hat y), \phi (y))^\alpha + d(\psi(\hat y), \phi (y))\},\\
   & =  \min\{ \tilde L d(\eta(\hat y), \eta (y))^\alpha + d(\eta(\hat y), \eta (y)), \tilde L d(\eta(\hat y), \phi (y))^\alpha +  d(\eta(\hat y), \phi (y))\},\\
  \end{aligned}
\end{equation}
for any $ y\in Y.$ This means that $\phi \backsim \eta ,$ as desired.
      \end{proof}

\section{An Ascoli-Arzel\`a compactness theorem}
In this section, we talk about compact subset of a metric space and so we can use Proposition \ref{Intrinsic H\"older  section.2} (and, in particular, \eqref{equationEQUIV}).
 \begin{theorem}[Compactness Theorem]
 Let $\pi:X\to Y$ be a quotient map  between a  metric space $X$ for which closed balls are compact and  a topological space $Y$.     Then,
  
\noindent{\rm \bf(i)} For all
    $K' \subset Y$ compact,
  $L\geq 1 $,   $\alpha \in (0,1), K \subset X$ compact, and  $y_0\in Y$
   the set
 \begin{equation*}\label{set2'}
\mathcal{A}_0:= \{ \phi _{|_{K'}} : K' \to X \,| \, \phi :Y \to X \mbox{ intrinsically $(L,\alpha )$-H\"older  section of $\pi$}, \phi (y_0) \in K \}
\end{equation*}
is 
 equibounded, equicontinuous, and closed in the uniform convergence topology.
 
 \noindent{\rm \bf(ii)} For all $L\geq 1 $,   $\alpha \in (0,1),   K \subset X$ compact, and  $y_0\in Y$
the set
 \begin{equation*}\label{set1'}
\{ \phi : Y \to X \,: \, \phi \text{ intrinsically $(L,\alpha )$-H\"older  section of $\pi$},
 \phi (y_0) \in K \}
\end{equation*}
  is compact with respect to  the uniform convergence on compact sets.
  \end{theorem}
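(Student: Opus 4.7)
The plan is to prove (i) in three separate steps and then derive (ii) by exhausting $Y$ with compact sets, deducing closedness of $\mathcal{A}_0$ from (ii).

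\emph{Equiboundedness.} First I would fix a base point $x_0\in X$ and exploit openness of $\pi$: the collection $\{\pi(B(x_0,n))\}_{n\in\N}$ is an open cover of $Y$, hence of $K'$, so by compactness there is $N\in\N$ with $K'\subset\pi(B(x_0,N))$. For every $y\in K'$ one can therefore pick $x\in B(x_0,N)$ with $\pi(x)=y$, which together with the boundedness of $K$ yields a constant $M$ with $d(\phi(y_0),\pi^{-1}(y))\le M$ for every $y\in K'$ and every $\phi$ in the family. The defining inequality \eqref{IntrinsicPROERTY} then forces $\phi(K')\subset B(\phi(y_0),LM^\alpha+M)\subset B(x_0,\mathrm{diam}(K)+LM^\alpha+M)$, which is a precompact set since closed balls of $X$ are compact.

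\emph{Equicontinuity at $y\in K'$.} The main trick is to show that $\Phi(x,z):=d(x,\pi^{-1}(z))$ is upper semi-continuous on $X\times Y$: if $\Phi(x,z)<r$ there is $x'\in\pi^{-1}(z)$ with $d(x,x')<r$, and setting $s=(r-d(x,x'))/2$ the product $B(x,s)\times\pi(B(x',s))$ is a neighbourhood of $(x,z)$ on which $\Phi<r$ (openness of $\pi$ makes $\pi(B(x',s))$ open in $Y$). Let $K_y$ be the closure of $\{\phi(y):\phi\text{ in the family}\}$; by equiboundedness and the compactness of closed balls in $X$ it is compact, and continuity of $\pi$ forces $K_y\subset\pi^{-1}(y)$, so $\Phi(z,y)=0$ for every $z\in K_y$. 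Given $\eps>0$, choose $\delta$ with $L\delta^\alpha+\delta<\eps$, cover $K_y$ by finitely many open sets $V_{z_1},\dots,V_{z_n}$ whose associated neighbourhoods $U_{z_i}$ of $y$ satisfy $\Phi<\delta$ on each $V_{z_i}\times U_{z_i}$, and set $U:=\bigcap_{i=1}^nU_{z_i}$. Any $\phi$ in the family has $\phi(y)\in V_{z_i}$ for some $i$, so for every $y'\in U$ the Hölder inequality gives $d(\phi(y),\phi(y'))\le L\Phi(\phi(y),y')^\alpha+\Phi(\phi(y),y')<\eps$.

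\emph{Compactness (ii) and closedness (i.3).} I would exhaust $Y$ by compact sets $\{K_m\}$ (taking, e.g., $K_m$ to be the closure of $\pi(B(x_0,m))$) and apply the classical Ascoli--Arzelà theorem on each $K_m$: the restricted family is equibounded and equicontinuous with values in a compact subset of $X$, so a subsequence converges uniformly on $K_m$. A diagonal extraction produces a subsequence $\phi_{n_k}$ converging uniformly on every compact set to some $\phi:Y\to X$. The relation $\pi\circ\phi=\mathrm{id}_Y$ passes to the limit by continuity of $\pi$; the Hölder inequality passes to the limit thanks to continuity of $\Phi$ in both variables (lower semi-continuity is the additional fact derivable from compactness of closed balls of $X$); and $\phi(y_0)\in K$ because $K$ is closed. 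This gives (ii), whence closedness of $\mathcal{A}_0$: a sequence $\phi_n|_{K'}\to g$ uniformly on $K'$ comes from global sections $\phi_n$, and a subsequence $\phi_{n_k}\to\phi$ on compact sets forces $g=\phi|_{K'}\in\mathcal{A}_0$.

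\emph{Main obstacle.} The hardest point is equicontinuity: openness of $\pi$ only yields a neighbourhood of $y$ around each \emph{individual} $\phi(y)$, whereas we need a single neighbourhood valid for the whole family. The key is to first use equiboundedness to confine the values $\{\phi(y):\phi\text{ in the family}\}$ to a compact set $K_y\subset\pi^{-1}(y)$ and then combine upper semi-continuity of $\Phi$ with a finite subcover of $K_y$ to extract a uniform neighbourhood of $y$ on which the Hölder estimate is controlled.
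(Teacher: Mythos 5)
Your proof is correct and reaches the same conclusions, but it reorganizes the argument in a genuinely different way. Where the paper proves equicontinuity by confining $\{\phi(y)\}$ to a compact $K_y$ and then citing a prepackaged ``uniform openness'' lemma (Remark~\ref{remnewok}, quoted from \cite{DDLD21}), you re-derive the needed uniformity from scratch through a standalone upper semi-continuity lemma for $\Phi(x,z)=d(x,\pi^{-1}(z))$, together with a finite subcover of $K_y$; this is a clean and self-contained alternative. You also invert the logical order at the end: the paper proves closedness of $\mathcal{A}_0$ directly, by passing to the pointwise limit in the inequality $d(\phi_h(y),\phi_h(y'))\le L\,d(\pi^{-1}(y),\phi_h(y'))^\alpha$ (which only requires $1$-Lipschitz continuity of $x\mapsto d(x,\pi^{-1}(y))$), and then deduces (ii) as a one-line corollary; you instead prove (ii) via an exhaustion-and-diagonal argument and then obtain closedness of $\mathcal{A}_0$ as a corollary of (ii). Both orderings are valid; the paper's is marginally more economical. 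Two small inaccuracies worth noting, neither fatal. First, the claim $K_y\subset\pi^{-1}(y)$ needs $\pi^{-1}(y)$ closed, i.e.\ $\{y\}$ closed in $Y$, which is not part of the hypotheses; but your argument in fact only uses $\Phi(\cdot,y)\equiv0$ on $K_y$, and this holds for free because $\Phi(\cdot,y)$ is $1$-Lipschitz and vanishes on the dense subset $\{\phi(y)\}\subset\pi^{-1}(y)$. Second, when passing the Hölder inequality to the limit you invoke joint continuity / lower semi-continuity of $\Phi$ in both variables; this is more than what is needed and more than what is available in this generality --- only the first variable changes along the limiting sequence, so continuity of $x\mapsto d(x,\pi^{-1}(y'))$ alone does the job, exactly as in the paper. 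Finally, your proposed exhaustion $K_m:=\overline{\pi(B(x_0,m))}$ need not consist of compact sets; the existence of such an exhaustion is an unstated extra hypothesis on $Y$, though to be fair the paper's own terse proof of (ii) incurs the same tacit assumption.
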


We need the following remark proved in \cite[Remark 2.1]{DDLD21}.
 \begin{rem}\label{remnewok}
Let $\pi :X \to Y$ be an open map, $K\subset X$ be a compact set and $y\in Y.$ Then $\pi$ is uniformly open on $K \cap \pi ^{-1} (y),$ in the sense that, for every $\varepsilon >0$ there is a neighborhood $U_\varepsilon $ of $y$ such that
\begin{equation*}
U_\varepsilon \subset \pi (B(x,\varepsilon )), \quad \forall x \in K \cap \pi ^{-1} (y).
\end{equation*}
  \end{rem}
  
\begin{proof}
\noindent{\rm \bf(i).} We shall prove that for all
    $K' \subset Y$ compact,
  $L\geq 1 $,  $\alpha \in (0,1),   K \subset X$ compact, and  $y_0\in Y$
   the set $\mathcal{A}_0$
 is 
\begin{description}
\item[(a)]  equibounded;
\item[(b)] equicontinuous;
\item[(c)] closed.
\end{description}

(a). Fix a compact set $K' \subset Y$ such that  $y_0\in K'.$ We shall prove that for any $y\in K'$ 
 \begin{equation*}\label{set3}
\mathcal{A}:= \{ \phi (y) \,: \, \phi _{|_{K'}} : K' \to X \mbox{ is an intrinsically $(L,\alpha )$-H\"older  section of $\pi$ and }  \phi (y_0) \in K \}
\end{equation*}
 is relatively compact in $X.$ Fix a point $x_0 \in K$ and let $k:=$ diam$_d(K)$ which is finite because $K$ is compact in $X.$ Then, for every $\phi$ belongs to $\mathcal{A}$, we have that
 \begin{equation*}
\begin{aligned}
d(x_0,\phi (y) ) & \leq d(x_0,\phi (y_0) ) + d(\phi (y_0),\phi (y) )\\
&  \leq k + Ld(\pi ^{-1}(y), \phi (y_0) )^\alpha  \\
& \leq k +L \max _{ x\in K} d(\pi ^{-1}(y), x )^\alpha,
\end{aligned}
\end{equation*}
where in the first equality we used the triangle inequality, and in the second one we used the fact that $\phi \in \mathcal{A}$ and $x_0\in K$. Finally, in the last inequality we used again $\phi (y_0)\in K$ and that the map $X\ni x \mapsto d(\pi ^{-1}(y) ,x)^\alpha $ is a continuous map and so admits maximum on compact sets. Since closed balls on $X$  are compact, we infer  that the set $\mathcal{A}$  is relatively compact in $X,$ as desired.

(b). We shall to prove that for every $y\in K'$ and every $\varepsilon >0$ there is an open neighborhood $U_y \subset K' \subset Y$ such that for any $\phi \in \mathcal{A}$  and any $y'\in U_y,$ it follows 
\begin{equation}\label{puntob}
d(\phi (y), \phi( y')) \leq \varepsilon .
\end{equation}

Because of equiboundedness, we have that for any $\phi \in \mathcal{A}, y \in Y$ the set $\phi (y)$ lies within a compact set $K_y$ and so, by Remark~\ref{remnewok}, $\pi$ is  uniformly open on $K_y \cap \pi ^{-1} (y).$ Now let $U_\varepsilon $ an neighborhood of $y$ such that $U_\varepsilon \subset \pi (B(x,( \varepsilon /L)^{1/\alpha}))$ for every $x\in K_y \cap \pi ^{-1} (y).$ Then we want to show that such  neighborhood $U_\varepsilon $ of $y$ is the set that we are looking for. Take $y'\in U_\varepsilon $ and let $x=\phi (y).$ Hence, there is $x'\in B(x, ( \varepsilon /L)^{1/\alpha}))$ with $\pi (x')=y'$ and, consequently, $x'\in \pi ^{-1}(y').$ Thus we have that for all $\phi$ belongs to $\mathcal{A}$
 \begin{equation*}
\begin{aligned}
d(\phi (y),\phi (y') ) \leq L d(\phi (y), \pi ^{-1}(y'))^\alpha \leq Ld(x,x')^\alpha \leq L \frac \varepsilon {L}  \leq \varepsilon ,
\end{aligned}
\end{equation*} i.e., \eqref{puntob} holds. 
Finally, since the bound is independent on $\phi ,$ we proved the equicontinuity.

(c). By (a) and (b) we can apply Ascoli-Arzel\'a Theorem to the set $\mathcal{A}$. Hence, every sequence in it has a converging subsequence. Moreover, this set is closed since if $\phi _h$ is a sequence in it converging pointwise  to $\phi$, then $\phi \in \mathcal{A}.$ Indeed, taking the limit of 
\begin{equation*}
d(\phi_h (y), \phi _h (y')) \leq L d(\pi ^{-1} (y), \phi _h (y') )^\alpha ,
\end{equation*}
one gets
\begin{equation*}
d(\phi (y), \phi  (y')) \leq L d(\pi ^{-1} (y), \phi (y') )^\alpha .
\end{equation*}
Finally, it is trivial that the condition $\phi _h (y_0) \in K$ passes to the limit since $K$ is compact.

\noindent{\rm \bf(ii).} Follows from the latter point $(i)$ using Ascoli-Arzel\'a Theorem.
\end{proof}

\section{Level sets and extensions}

In this section we prove Theorem~\ref{thm3}. The proof of this statement is similar to the one of  \cite[Theorem 1.4]{DDLD21} regarding Lipschitz sections theory (see also \cite[Theorem~1.5]{Vittone20}). 
We need to mention several earlier partial results on extensions of Lipschitz graphs in the context of Carnot groups, as for example in \cite{FSSC06, DDF},   \cite[Proposition~4.8]{MR3194680}, 
   \cite[Proposition~3.4]{V12}, 
 \cite[Theorem~4.1]{FS16}. 
 
   Regarding extension theorems in metric spaces, the reader can see \cite{ALPD20, LN05, O09} and their references.

  \begin{proof}[Proof of Theorem~\ref{thm3} \noindent{\rm \bf(\ref{thm3}.i)}]
We begin noting that for any $y\in Y$ the map $f_{|_{\pi^{-1} (y)}} : \pi^{-1} (y) \to Z$ is biH\"older  and so for any $y\in Y$ there is a unique $x \in \pi^{-1} (y)$ such that $f(x)=z_0.$ Hence, it is natural to define $\phi (y):=x$ and so \eqref{equationluogoz0_intro} holds. Moreover, $\phi : Y\to X $ intrinsically $(\lambda^2 , \beta )$-H\"older; indeed, if we consider $x_1 \in \pi^{-1} (y_1) \cap f^{-1} (z_0)$ and $x_2 \in \pi^{-1} (y_2) \cap f^{-1} (z_0),$ then we would like to prove that
\begin{equation}\label{dis27set}
d(x_1,x_2) \leq \lambda^2 d(x_1, \pi ^{-1}( y_2))^{\beta } + d(x_1, \pi ^{-1}( y_2)).
\end{equation}
Let $\bar x_2 \in \pi ^{-1}( y_2)$ such that $d(x_1, \bar x_2)=  d(x_1, \pi ^{-1}( y_2)),$ then it follows that 
\begin{equation}\label{servenelladimVITTONE}
\begin{aligned}
d(x_1,x_2) & \leq d(x_1, \bar x_2)+  d(\bar x_2, x_2)\\
&  \leq  d(x_1,\bar x_2)+ \lambda  d(f(\bar x_2),f(x_2)) \\
& =  d(x_1,\bar x_2)+ \lambda  d(f(\bar x_2), f(x_1)) \\
& \leq d(x_1, \pi ^{-1}( y_2)) + \lambda^2 d(x_1, \pi ^{-1}( y_2))^{\beta },
\end{aligned}
\end{equation}
where in the first inequality we used the triangle inequality, and in the second inequality we used the biLipschitz property of $f$ on the fibers.  In the first equality we used the fact that $f(x_1)=f(x_2)=z_0$ and finally we used again the biH\"older property of $f$. 
Consequently, \eqref{dis27set} is true and the proof is complete.
\end{proof}

\medskip

  \begin{proof}[Proof of Theorem $\ref{thm3}$ \noindent{\rm \bf(\ref{thm3}.ii)}]  Fix $x_0 \in  \tau ^{-1} (\tau_0).$ We consider the map $f_{x_0} :X \to \R$ defined as
\begin{equation*}\label{int}
f_{x_0}(x)=\left\{ 
\begin{array}{lcl}
2(\tau (x)-\tau (x_0) - \gamma (\delta _{\tau _0} (x)^\alpha + \delta _{\tau _0} (x)) &   & \mbox{ if } \, \, |\tau (x)-\tau (x_0)| \leq 2\gamma [\delta _{\tau _0}(x)^\alpha +\delta _{\tau _0}(x) ]\\
\tau (x)-\tau (x_0) &    & \mbox{ if }\,\, \tau (x)-\tau (x_0) > 2\gamma [\delta _{\tau _0}(x)^\alpha  + \delta _{\tau _0} (x) ]\\
3( \tau (x)-\tau (x_0)) &    & \mbox{ if }\,\, \tau (x)-\tau (x_0) < -2\gamma [\delta _{\tau _0}(x)^\alpha  +\delta _{\tau _0}(x) ]\\
\end{array}
\right.
\end{equation*}
where $\gamma := 2kL+1.$
We prove that $f_{x_0}$ satisfies the following properties:
\begin{description}
\item[$(i)$]  $f_{x_0}$ is $(K,\alpha)$-H\"older;
\item[$(ii)$]  $f_{x_0}(x_0)=0;$
\item[$(iii)$]  $f_{x_0}$ is biLipschitz on fibers. 
\end{description}
where $K= \max\{ 3k, 2k+4\gamma k \}=2k+4\gamma k$ because $\gamma >1.$ The property $(i)$ follows using that $\tau , \delta _{\tau _0}$ are H\"older and Lipschitz, respectively, and $X$ is a geodesic space. On the other hand, $(ii)$ is true noting that  $\delta _{\tau _0}(x_0)= \rho (x_0, \phi_{\tau_0}(\pi (x_0)))=0$ because $x_0 \in \phi _{\tau_0} (Y).$

Finally, for any $x,x' \in \pi ^{-1} (y) $ we have that $\rho (x_0, \phi_{\tau_0}(\pi (x)))=\rho (x_0, \phi_{\tau_0}(\pi (x')))$  and so $f$ is biLipschitz on fibers because $\tau$ is so too.  
Hence $(iii)$ holds.

Now we consider the map $f:X \to \R$ given by
\begin{equation*}
f(x) := \sup _{x_0 \in \phi (Y)} f_{x_0} (x), \quad \forall x \in X,
\end{equation*}
and we want to prove that it is the map we are looking for. The H\"older property and the Lipschitz property on the fiber are true recall that the function $\delta _{x_0} $ is constant on the fibers. Consequently, the only non trivial fact to show is \eqref{equationluogozeri_intro}. 
Fix $\bar x_0 \in \phi (Y')$.
By $(ii)$ we have that $f_{ \bar x_0}( \bar x_0)=0$ and so it is sufficient  to prove that $f_{x_0}(\bar x_0)\leq 0$ for $x_0\in \phi (Y').$ Let $x_0\in \phi (Y').$  Then  using in addition that $\tau$ is $k$-Lipschitz, and that $\phi$ is  intrinsically $L$-Lipschitz, we have
\begin{equation*}
\begin{aligned}
|\tau (\bar x_0) -\tau( x_0)| & \leq k d(\bar x_0 , x_0)  \leq  kL d(x_0,  \phi_{\tau_0}(\pi (\bar x_0)) )^\alpha + kd\left(x_0,  \phi_{\tau_0}(\pi (\bar x_0)) \right)   \\
& <\gamma \left(\delta _{\tau _0} (\bar x _0)^\alpha + \delta _{\tau _0} (\bar x _0) \right),
\end{aligned}
\end{equation*}
and so
\begin{equation*}
f_{x_0}(\bar x_0) = 2(\tau (\bar x_0)-\tau (x_0) - \gamma \left(\delta _{\tau _0} (\bar x_0) ^\alpha +\delta _{\tau _0} (\bar x_0) \right) )<0,
\end{equation*}
i.e., \eqref{equationluogozeri_intro} holds.

   \end{proof}

\section{Ahlfors-David regularity}\label{theoremAhlforsNEW} 
 In this section we prove Ahlfors-David regularity for the intrinsically H\"older  sections. The proof of this statement is similar to the one of \cite[Theorem 1.3]{DDLD21} (see also \cite{D22.2} for the intrinsically quasi-isometric case). 
 
In order to give the proof, we need the following statement
  \begin{lem}\label{perHolder.303}
 Let $X$ be a metric  space, $Y$   a topological space, and $\pi:X\to Y$ a quotient map. If $\phi :Y \to X$ is an intrinsically $(L, \alpha)$-H\"older section of $\pi$ with $\alpha \in (0,1)$ and $L>0,$ then
		\begin{equation}\label{inclusionepalle.0}
\pi \left(B\left(p, r \right) \right) \subset \pi ( B(p,(L+1)r^\alpha ) \cap \phi (Y)) \subset \pi (B(p,(L+1)r^\alpha)), \quad \forall p\in \phi (Y), \forall r>0.
\end{equation}
		
  \end{lem}

  \begin{proof}
Regarding the first inclusion, fix $p\in \phi (Y), r>0$ and $q\in B(p,r).$  We need to show that $\pi (q) \in \pi (\phi (Y) \cap B(p,(L+1)r^\alpha)).$ Actually, it is enough to prove that 
\begin{equation}\label{equation2.6.1}
\phi (\pi (q)) \in B(p,(L+1)r^\alpha),
\end{equation}
 because if we take $g:= \phi (\pi (q)),$ then $g\in \phi (Y)$ and 
 \begin{equation*}
 \pi (g)= \pi (\phi (\pi (q))) =\pi (q) \in \pi (\phi (Y) \cap B(p,(L+1)r^\alpha)).
\end{equation*}
 
 Hence in a similar way to the point (i), we get that for any $p, q, g \in \phi (Y)$ with $g= \phi (\pi (q)),$
 \begin{equation*}
  \begin{aligned}
d(p,g)& \leq L d(p, \pi ^{-1} (\pi(g)))^\alpha + d(\phi (y_1), \pi ^{-1} (\pi(g))) \\
& \leq  L d(p, q)^\alpha + d(p, q) \\
& \leq ( L+1) r^\alpha ,
\end{aligned} \end{equation*}
i.e., \eqref{equation2.6.1} holds, as desired.  Finally, the second inclusion in \eqref{inclusionepalle.0} follows immediately noting that $\pi (\phi (Y))=Y$ because $\phi$ is a section and the proof is complete. 
  \end{proof}

    Now we are able to give the proof of Theorem $\ref{thm2}.$
  \begin{proof}[Proof of Theorem $\ref{thm2}$]
  Let $\phi$ and $\psi$ intrinsically $(L,\alpha)$-H\"older sections, with $L>0$ and $\alpha \in (0,1)$.
  Fix $y\in Y.$  By Ahlfors-David regularity of $\phi (y),$ we know that there is $c_1>0$ such that 
     \begin{equation}\label{AhlforsNEW0}
 \phi _* \mu \big( B(\phi (y),r) \cap \phi (Y)\big)  \leq c_1 r^{\ell +1-\alpha},
\end{equation}
for all $0\leq r \leq 1.$ We would like to show that there is $c_2>0$ such that
   \begin{equation}\label{AhlforsNEW127}
\psi _* \mu \big( B(\psi(y),r) \cap \psi (Y)\big)  \leq c_2 r^{\alpha ({\ell +1-\alpha})},
\end{equation}
  for every $0\leq r \leq 1.$    
  We begin noticing that 
\begin{equation}\label{serveperAhlfors27}
  \psi _* \mu \big( B(\psi(y),r) \cap \psi (Y)\big) =  \mu ( \psi^{-1} \big( B(\psi(y),r) \cap \psi (Y)\big) ) = \mu ( \pi \big( B(\psi(y),r) \cap \psi (Y)\big) ), 
\end{equation}
  and, consequently, 
\begin{equation*}
\begin{aligned} 
\psi _* \mu \big( B(\psi(y),r) \cap \psi (Y)\big) & \leq  \mu (\pi (B(\psi (y),r))) \leq C  \mu (\pi (B(\phi (y),r)))\\
& \leq C  \mu (\pi (B(\phi (y), (L+1)r^\alpha ) \cap \phi (Y)))\\
& = C  \phi _* \mu \big( B(\phi(y),  (L+1)r^\alpha ) \cap \phi (Y)\big)  \leq  {c_1} C  (L+1)^{\alpha ({\ell +1-\alpha})} r^{\alpha ({\ell +1-\alpha})}.
\end{aligned}
\end{equation*}
where in the first inequality we used  the second inclusion of \eqref{inclusionepalle.0}  with $\psi$ in place of $\phi$, and in the second one we used \eqref{Ahlfors27ott.112}. In the  third  inequality we used the first inclusion of \eqref{inclusionepalle.0} and in the fourth one we used  \eqref{serveperAhlfors27}  with $\phi$ in place of $\psi.$
  \end{proof}

 \bibliographystyle{alpha}
\bibliography{DDLD}

\end{document}